\newtheorem{thm}[subsection]{Theorem}
\newtheorem*{thm*}{Theorem}
\newtheorem{cor}[subsection]{Corollary}
\newtheorem{lem}[subsection]{Lemma}
\newtheorem*{lem*}{Lemma}
\newtheorem{prop}[subsection]{Proposition}
\newtheorem*{prop*}{Proposition}
\theoremstyle{definition}
\newtheorem{example}[subsection]{Example}
\numberwithin{equation}{subsection}
\newcommand{\quash}[1]{}
\newcommand{\CC}{\mathbb C}
\newcommand{\PP}{\mathbb P}
\newcommand{\RR}{\mathbb R}
\newcommand{\ZZ}{\mathbb Z}
\DeclareMathOperator{\Pf}{Pf}
\DeclareMathOperator{\Eu}{\rm Eu}
\DeclareMathOperator{\CM}{\rm CM}
\DeclareMathOperator{\Gr}{\rm Gr}
\DeclareMathOperator{\End}{\rm End}
\newcommand{\oX}{{\overline{X}}}
\newcommand{\ocaX}{{\overline{\mathcal{X}}}}
\newcommand{\one}{1\hskip-3.5pt1}
\begin{document}

\title{Pfaffian integrals and invariants of singular varieties}
\author{Paolo Aluffi}\thanks{Department of Mathematics, Florida State
  University, Tallahassee, FL}
\author{Mark Goresky}\thanks{School of Mathematics, Institute for
  Advanced Study, Princeton NJ} 

\dedicatory
{Dedicated to L\^e D\~ung Tr\'ang on the occasion of his 70th
birthday}

\maketitle

\section{Introduction}\subsection{}
Chern classes of singular varieties have enjoyed an enormous
development over the last forty years with hundreds of published
papers and a number of books dedicated to the subject.
The simple nature of the Gauss Bonnet theorem has received
less attention, after a short announcement in \cite{Dubson}; see also 
\cite{Satake}, \cite{Siersma}, \cite{Fu}. In~\cite{Buryak1, Buryak2},
Alexandr Buryak studies the more general case of characteristic numbers
for singular varieties, showing that any collection of characteristic numbers 
can be realized by a singular complex projective variety.
In this paper we reformulate some of the 
statements in \cite{Dubson, Buryak2} and give a number of extensions, 
applications and examples as we now describe.

Let $\oX$ be an $n$ dimensional complex projective algebraic
variety and let $X$ be the nonsingular part.  Let $\Omega^r(X;\CC)$
denote the smooth differential $r$-forms on $X$.  Endow $X$ with the
K\"ahler metric that is induced from the Fubini Study metric on the
ambient projective space.  Associated to this metric there is a unique
torsion-free (Levi Civita) connection $\nabla_X$ and its curvature
$2$-form $\Theta\in \Omega^2(X; \End(TX))$ with values in the
endomorphism bundle of the tangent bundle $TX$.  Let
\[ 
\Pf(\Theta_{\RR})  \in\Omega^{2n}(X;\RR) 
\]
be the Pfaffian $2n$-form of the underlying real vector bundle with
connection.  If $X= \oX$ is compact and nonsingular, then the Gauss
Bonnet theorem states that the integral
\[
(2\pi)^{-n}\int_X \Pf(\Theta_{\RR}) = \chi(X)
\] 
is equal to the Euler characteristic of $X$.  In the noncompact case,
however, this is far from true.  It does not even seem obvious, a
priori, that the above integral is finite, or that it is independent
of the embedding in projective space.

\subsection{}  
In \cite{Mac}, R. MacPherson defines, for each point $y \in \oX$, the
local Euler obstruction $\Eu_y(\oX)\in \ZZ$.  It is constant as the
point $y$ varies within any stratum of a Whitney stratification of $X$
and has become a well studied local invariant of singular spaces.

\begin{thm}\label{thm-integral}
Choose a Whitney stratification of $\oX$ so that $X$ is the largest
stratum.  Then the Gauss Bonnet integral is equal to the following
sum,
\begin{equation}\label{eqn-sum}
(2 \pi)^{-n} \int_X \Pf(\Theta_{\RR}) = \chi(X) + \sum_{Y<X} \Eu_y(\oX) \chi(Y)
\end{equation}
(where the sum is over singular strata $Y$ and where $y\in Y$).  It is
an integer, is independent of the embedding, and is constant within
any (Whitney) equisingular (\cite{Teissier}) family.
\end{thm}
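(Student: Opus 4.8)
\emph{Strategy, and the right-hand side.} The plan is to identify both sides of \eqref{eqn-sum} with the degree $\int_{\oX}c_M(\oX)$ of the Chern--Mather class of $\oX$, i.e.\ with the degree of the dimension-zero component of the homology class $c_M(\oX)$. For the right-hand side: the local Euler obstruction assembles into a constructible function $\Eu_{\oX}\colon y\mapsto\Eu_y(\oX)$, equal to $1$ on the smooth stratum $X$; by additivity of $\chi$ over the stratification, the right-hand side of \eqref{eqn-sum} is $\sum_S\Eu_S\,\chi(S)=\int_{\oX}\Eu_{\oX}\,d\chi$, the Euler-characteristic integral of $\Eu_{\oX}$ (sum over all strata $S$, with $\Eu_S$ the constant value of $\Eu_{\oX}$ on $S$). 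By MacPherson's theorem \cite{Mac}, $c_*(\Eu_{\oX})=c_M(\oX)$ for the natural transformation $c_*$ from constructible functions to homology, and the degree of $c_*(\varphi)$ equals $\int\varphi\,d\chi$; hence the right-hand side of \eqref{eqn-sum} is $\int_{\oX}c_M(\oX)$, in particular an integer.

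\emph{A smooth model for the left-hand side.} Next I would transport the left-hand integral to a resolution. Let $\nu\colon\widehat X\to\oX$ be the Nash blow-up --- the closure in $\oX\times\Gr(n,T\PP^N)$ of the graph of the Gauss map $x\mapsto T_xX$ on $X$ --- carrying its tautological rank-$n$ Nash subbundle $\widetilde{\mathcal T}\hookrightarrow\nu^{*}T\PP^N$, which over $\nu^{-1}(X)\cong X$ is $TX\hookrightarrow T\PP^N$. By Hironaka, choose a smooth compact $\widetilde X$ and a proper birational $p\colon\widetilde X\to\widehat X$ that is an isomorphism over $\nu^{-1}(X)$; set $\pi=\nu\circ p$ and $\widetilde T=p^{*}\widetilde{\mathcal T}\hookrightarrow\pi^{*}T\PP^N$. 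Since $p_{*}[\widetilde X]=[\widehat X]$ and $c_M(\oX)=\nu_{*}\big(c(\widetilde{\mathcal T})\cap[\widehat X]\big)$, the projection formula gives $\int_{\oX}c_M(\oX)=\int_{\widetilde X}c_n(\widetilde T)$. Now equip $\widetilde T$ with the Hermitian metric induced from the Fubini--Study metric on $T\PP^N$ through $\widetilde T\subset\pi^{*}T\PP^N$ --- smooth and positive-definite on all of the compact $\widetilde X$ --- and let $\widetilde\Theta$ be the curvature of its Chern connection. Chern--Weil (in the normalization of the present statement) then yields $(2\pi)^{-n}\int_{\widetilde X}\Pf(\widetilde\Theta_{\RR})=\int_{\widetilde X}c_n(\widetilde T)=\int_{\oX}c_M(\oX)$.

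\emph{Comparison over $X$.} The last step is to show $\int_X\Pf(\Theta_{\RR})=\int_{\widetilde X}\Pf(\widetilde\Theta_{\RR})$. Over $\pi^{-1}(X)\cong X$ the bundle $\widetilde T$ is $\pi^{*}TX$ with $\pi^{*}$ of the Fubini--Study metric; since $X$ is K\"ahler, the Chern connection of $TX$ for that metric coincides with the Levi--Civita connection $\nabla_X$, and both identifications commute with $\pi^{*}$, so $\Pf(\widetilde\Theta_{\RR})$ restricts over $\pi^{-1}(X)$ to $\pi^{*}\Pf(\Theta_{\RR})$. Because $\widetilde X\setminus\pi^{-1}(X)$ is a proper analytic subset, hence of measure zero, and $\Pf(\widetilde\Theta_{\RR})$ is a smooth --- hence bounded --- form on the compact $\widetilde X$, it follows that $\int_X\Pf(\Theta_{\RR})=\int_{\pi^{-1}(X)}\Pf(\widetilde\Theta_{\RR})=\int_{\widetilde X}\Pf(\widetilde\Theta_{\RR})$; in particular the left-hand integral converges absolutely. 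Combined with the preceding paragraph, this proves \eqref{eqn-sum}, with common value the integer $\int_{\oX}c_M(\oX)$.

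\emph{The remaining assertions, and the main obstacle.} Independence of the embedding is then formal: the Nash blow-up and Nash bundle, hence $c_M(\oX)$ and its degree, depend only on $\oX$ as an abstract variety, even though the integrand on the left is built from the embedding-dependent Fubini--Study metric. In a Whitney equisingular family \cite{Teissier} I would argue that the Thom--Mather isotopy lemma keeps the fibers stratified-homeomorphic, so the Euler characteristics of corresponding strata are constant, while the local Euler obstruction is constant along the family (e.g.\ via constancy of the polar multiplicities that compute it); hence $\sum_S\Eu_S\chi(S)=\int c_M$ does not vary. The genuinely delicate point in the argument is the convergence of $\int_X\Pf(\Theta_{\RR})$ and its blindness to the singularities of $\oX$; what makes it go is precisely that near $\oX\setminus X$ the Fubini--Study geometry of $TX$ is merely the restriction of the geometry of the Nash bundle, which --- after passing to the resolution --- extends to a smooth $2n$-form on a compact manifold, so that a neighborhood of the singular locus contributes nothing rather than contributing a term that one must estimate away.
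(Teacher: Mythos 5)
Your proposal is correct and follows essentially the same route as the paper: the left-hand side is computed by pulling the Nash bundle with its Fubini--Study-induced Hermitian metric back to a resolution of the Nash blow-up and applying Chern--Weil together with the identification of the top Chern form with the Pfaffian (the paper's Proposition \ref{prop-main}), while the right-hand side is identified with $\deg c^M_0(\oX)$ via MacPherson's transformation applied to the constructible function $y\mapsto \Eu_y(\oX)$, exactly as in \S\ref{ss:Eu}. The remarks on embedding-independence and equisingular families likewise match the paper's argument.
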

This result is extended to higher Chern classes in Proposition
\ref{prop-main} and proven in~\S\ref{sec-proof}. Our argument
relies on the Nash blow-up (the same strategy is also used in~\cite{Buryak2}).
In~\S\ref{sec-intrinsic} we give an intrinsic interpretation of the integral,
relying on {\em controlled differential forms\/} rather than Nash blow-up.
The more general case of
characteristic numbers is treated in~\S\ref{sec:MCnos}, and extended to
degrees of products of Mather Chern classes. We prove that the information
carried by the degrees of Mather Chern classes is equivalent to a suitable
collection of Pfaffian integrals. In \S \ref{sec-families} we answer a query 
of Robert Langlands concerning the behavior of the Gauss Bonnet integral 
in a flat family. In~\S\ref{sec:Efq} we briefly mention natural extensions.

\subsection{}  
We are grateful to Robert Langlands for asking these questions and for
prompting us to publish these results.
We wish to thank J\"org Sch\"urmann for reviewing an earlier version of this 
note and for informing us of the work of Alexandr Buryak (\cite{Buryak1, 
Buryak2}).

We are pleased to dedicate this paper to L\^e D\~ung Tr\'ang in recognition 
of his tremendous influence on singularity theory.

\section{Higher Chern classes}

\subsection{}  
Suppose, as in the introduction, that $X$ is the nonsingular part of a
complex projective variety $\oX$ equipped with the Fubini Study metric
and associated Levi Civita connection $\nabla$ and curvature form
$\Theta$.  The {\em total Chern form}
\begin{equation}\label{eqn-chern}
 c^*(\Theta) = \det \left( I + \frac{\sqrt{-1}}{2\pi}\Theta\right) 
\in \Omega^*(X;\CC)
\end{equation}
is a sum of homogeneous terms of even degree, and the degree $2r$
part, denoted $c^r(\Theta)$, is equal to $(\sqrt{-1}/2\pi)^r$ times
the $r$-th elementary symmetric function of the eigenvalues of
$\Theta$.

\subsection{}
Let $b:\widehat{X} \to \oX$ be the Nash blowup of $\overline X$; it is
defined to be the closure of the image of $X \to \Gr_n(T \PP^n)$ in
the Grassmann bundle of $n$ dimensional subspaces of the tangent
bundle of $\PP^n$, under the Gauss map $x \mapsto T_xX$.  The
tautological $n$-plane bundle on $\Gr_n(T \PP^n)$ when restricted to
$\widehat{X}$ is a vector bundle, denoted $\xi$, that extends the
tangent bundle $TX \to X$. Let $c^r(\xi) \in H^{2r}(\widehat{X};\ZZ)$
denote its $r$-th Chern class.  Recall \cite{Mac} that the {\em Mather
  Chern class} of $\oX$ is the homology class $c^M(\overline X)$ whose
component of dimension~$n-r$ is
\[
c^M_{n-r}(\oX) = b_*\left(c^r(\xi) \cap [\widehat{X}]\right) 
\in H_{2n-2r}(\oX;\ZZ)
\]   
where $[\widehat{X}] \in H_{2n}(\widehat{X};\ZZ)$ denotes the
fundamental (orientation) class of $\widehat{X}$.

\subsection{} 
Each point $y \in \widehat{X}$ corresponds to an $n$-dimensional
subspace $\xi_y$ of $T_{\pi(y)}\PP^n$ and so it inherits a Hermitian
metric from the Fubini Study metric on $\PP^n$.  Thus, not only does
the tangent bundle of $X$ extend to a vector bundle on $\widehat{X}$
but also the K\"ahler metric on $X$ extends to a Hermitian metric on
the budle $\xi$. We would like to say that the canonical connection
$\nabla$ and its curvature 2-form $\Theta$ also extends to
$\widehat{X}$ but unfortunately this variety may still be singular
(although it may be possible, with some work, to make sense of these
notions in this setting).  We avoid these technical difficulties by
passing to a resolution of singularities
\begin{equation*}
\begin{CD}
\widetilde{X} @>{\pi}>> \widehat{X} @>{b}>>\oX
\end{CD}
\end{equation*}
of the Nash blowup $\widehat{X}$.

\subsection{}
The Hermitian metric on $\xi$ pulls back to a Hermitian metric on the
vector bundle $\pi^*(\xi)$.  Let $\widetilde{\nabla}$ be the
associated connection; it therefore extends the Levi Civita connection
$\nabla$ on $TX$.  Let $\widetilde{\Theta}$ denote its curvature
2-form (with values in $\End(\pi^*(\xi))$) and let
\[
c^*(\widetilde\Theta) = \det\left(I + \frac{\sqrt{-1}}{2\pi}
\widetilde{\Theta}\right)
\] 
be the corresponding total Chern form on $\widetilde{X}$ with its
associated cohomology class $[ c^*(\widetilde{\Theta})] \in
H^*(\widetilde{X};\CC).$ Let $\deg :H_0(\oX;\ZZ) \to \ZZ$ denote the
augmentation.

\begin{prop}\label{prop-main}
The homology class
\begin{equation}\label{eqn-main}
b_* \pi_*\left([c^*(\widetilde{\Theta})] \cap [\widetilde{X}]\right) 
= c^M(\oX) 
\end{equation}
coincides with the (total) Mather Chern class of $\oX$.  Consequently
the Gauss Bonnet integral
\begin{equation}\label{eqn-thm}
(2\pi)^{-n}\int_X \Pf(\Theta_{\RR}) = \deg c^M_0(\oX)
\end{equation} 
is equal to the degree zero part of the Mather-Chern class. 
\end{prop}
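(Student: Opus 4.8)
The plan is to reduce both assertions to Chern--Weil theory on the nonsingular compact manifold $\widetilde X$, combined with the projection formula and the very definition of the Mather Chern class. First I would record the input behind \eqref{eqn-main}: $\widetilde X$ is a nonsingular projective variety, hence a compact complex manifold, and $\widetilde\nabla$ is a connection on the complex vector bundle $\pi^*\xi$ with curvature $\widetilde\Theta$, so the Chern--Weil homomorphism identifies the de Rham class $[c^*(\widetilde\Theta)] \in H^*(\widetilde X;\CC)$ with $\pi^* c^*(\xi)$, the image of the topological total Chern class $c^*(\xi) \in H^*(\widehat X;\ZZ)$ used to define $c^M(\oX)$.

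Next I would push forward. Since $\pi$ is a resolution of singularities it is proper and birational, so $\pi_*[\widetilde X] = [\widehat X]$, and the projection formula gives
\[
\pi_*\bigl([c^*(\widetilde\Theta)]\cap[\widetilde X]\bigr)
= \pi_*\bigl(\pi^*c^*(\xi)\cap[\widetilde X]\bigr)
= c^*(\xi)\cap[\widehat X].
\]
Applying $b_*$ and comparing degree by degree with $c^M_{n-r}(\oX) = b_*\bigl(c^r(\xi)\cap[\widehat X]\bigr)$ yields \eqref{eqn-main} (at the level of $H_*(\oX;\CC)$); in particular, since the right-hand side does not mention $\widetilde X$, the construction is independent of the chosen resolution.

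For the consequence \eqref{eqn-thm} I would extract the dimension-zero component of \eqref{eqn-main}. On the $n$-dimensional $\widetilde X$ this component comes only from the top-degree piece $c^n(\widetilde\Theta)$, and since $\deg$ followed by proper pushforward to a point is integration over $\widetilde X$,
\[
\deg c^M_0(\oX) = \deg\, b_*\pi_*\bigl(c^n(\widetilde\Theta)\cap[\widetilde X]\bigr) = \int_{\widetilde X} c^n(\widetilde\Theta).
\]
Now $c^n(\widetilde\Theta)$ is the top Chern form of the rank-$n$ Hermitian bundle $\pi^*\xi$, and by the classical identity between the top Chern form and the Euler form of the underlying oriented real rank-$2n$ bundle one has $c^n(\widetilde\Theta) = (2\pi)^{-n}\Pf(\widetilde\Theta_{\RR})$ in the normalization of the introduction, so $\deg c^M_0(\oX) = (2\pi)^{-n}\int_{\widetilde X}\Pf(\widetilde\Theta_{\RR})$.

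It remains to descend to $X$. Both $b$ and $\pi$ are isomorphisms over $X$ --- the Nash blowup is an isomorphism over the nonsingular locus and $\pi$ is an isomorphism over $b^{-1}(X)$, which lies in the smooth part of $\widehat X$ --- so $U := (b\pi)^{-1}(X)$ is a dense open subset of $\widetilde X$ identified with $X$, with complement a proper subvariety, hence of measure zero. By construction $\widetilde\nabla$ restricts over $U \cong X$ to the Levi--Civita connection $\nabla$, so $\widetilde\Theta|_U = \Theta$ and $\int_{\widetilde X}\Pf(\widetilde\Theta_{\RR}) = \int_X\Pf(\Theta_{\RR})$; in passing this shows the Gauss--Bonnet integral converges, being the integral over a dense open set of a smooth form on the compact manifold $\widetilde X$. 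Combining with the previous step gives \eqref{eqn-thm}. The one genuinely non-formal ingredient is the first step --- knowing that the Chern forms built from the extended curvature $\widetilde\Theta$ represent the complexified topological Chern classes of $\pi^*\xi$ --- and this is exactly why one passes from the possibly singular $\widehat X$ to the nonsingular $\widetilde X$, since Chern--Weil theory is not directly available on $\widehat X$; everything else is standard.
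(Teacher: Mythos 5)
Your proposal is correct and follows essentially the same route as the paper's own proof: Chern--Weil on the resolution $\widetilde X$ of the Nash blowup, pushforward via the projection formula to recover the definition of $c^M(\oX)$, the identification of the top Chern form with $(2\pi)^{-n}\Pf(\widetilde\Theta_{\RR})$, and the observation that $\widetilde X \smallsetminus X$ has measure zero so the integral descends to $X$. You merely spell out a few steps (such as $\pi_*[\widetilde X]=[\widehat X]$ and the independence of the resolution) that the paper leaves implicit.
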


\begin{proof}
The usual Chern Weil theorem describes the Chern class of $\pi^*(\xi)$
as follows:
\[ 
c^*(\pi^*(\xi)) = [c^*(\widetilde{\Theta})].
\]
Therefore
\begin{align*}
b_*\pi_*([c^*(\widetilde\Theta)] \cap [\widetilde{X}]) 
&= b_*\pi_*(\pi^*c^*(\xi)\cap [\widetilde{X}])\\
&= b_*(c^*(\xi) \cap [\widehat{X}])\\
&= c^M(\oX) 
\end{align*}
which proves equation (\ref{eqn-main}).  For a complex rank $n$ vector
bundle (such as $\pi^*(\xi)$) the top Chern form agrees with the
Pfaffian (see, for example, \cite{Ballman} p.~186), that is,
\[ 
c^n(\widetilde\Theta) = \det\left(\frac{\sqrt{-1}}{2\pi}
\widetilde\Theta\right) = 
\Pf\left(\frac{1}{2\pi}\widetilde\Theta_{\RR}\right).
\] 
Moreover, the differential form $\Pf(\widetilde{\Theta}_{\RR})$ on
$\widetilde{X}$ is smooth.  It agrees with $\Pf(\Theta_{\RR})$ on $X$,
and $\widetilde{X} - X$ has measure zero.  So the degree zero part of
equation (\ref{eqn-main}) becomes
\[ 
\int_{\widetilde{X}} c^n(\widetilde\Theta) 
= (2\pi)^{-n}\int_{\widetilde{X}}\Pf(\widetilde\Theta_{\RR})
= (2\pi)^{-n}\int_X \Pf(\Theta_{\RR}).\qedhere
\]
\end{proof}

\section{Proof of Theorem \ref{thm-integral}}\label{sec-proof}
\subsection{}\label{ss:Eu}
As in \cite{Mac}, the Euler obstruction determines an isomorphism $T$
from the group of algebraic cycles on $\oX$ to the group of
constructible functions on $\oX$ by
\[ 
T(\sum_ia_iV_i)(p) = \sum_i a_i \Eu_p(V_i).
\]
The (MacPherson) total Chern class of a constructible function $F$ is
defined to be $c_*(F) = c^MT^{-1}(F)$.  It follows that the Mather
Chern class of $\oX$ is the MacPherson Chern class of the
constructible function that is $F(p) = \Eu_p(\oX)$.  Proposition
\ref{prop-main} says that the Gauss Bonnet integral is equal to the
degree zero part of this class which is therefore the Euler
characteristic of this constructible function $F$, giving equation
\eqref{eqn-sum}.

\subsection{}
The Euler obstruction is constant on strata of a Whitney stratification 
(\cite{BrSch}) and it is independent of the embedding so the right
side of (\ref{eqn-sum}) is independent of the embedding and is
constant within any Whitney-equisingular family.  (In fact, the Nash
blowup is independent of the embedding and can be defined
intrinsically on $\oX$ according to \cite{No}.)  This completes the
proof of Theorem \ref{thm-integral}.  \qed

\section{The map to homology}\label{sec-intrinsic}
\subsection{}
It is possible to give an intrinsic description of the manner in which
the differential form $c^*(\Theta)\in \Omega^*(X)$ gives rise to a
homology class in $H_*(\oX;\CC)$ in equation \eqref{eqn-main}, without
referring to the Nash blowup.  We give an outline only because the
result uses various technicalities involving Whitney stratifications,
and it is not essential for the development in this paper.

\subsection{}
For any Whitney stratification of $\oX$ and for an associated choice
of control data (\cite{Thom, Mather}) there is a collection of {\em
  controlled differential forms} (\cite{V, GP}).  A controlled
differential $r$-form (with real coefficients) is a differential
$r$-form $\eta_A$ on each stratum $A$ with the property that, whenever
$A<B$ are strata, then $\eta_B|T_A = \pi_{BA}^*(\eta_A)$ where
$\pi_{BA}:(T_A \cap B) \to A$ is the projection function of a tubular
neighborhood of $A$.  The controlled differential forms constitute a
complex $\Omega^*_{con}(\oX;\RR)$ whose cohomology is naturally
isomorphic to the ordinary cohomology $H^*(\oX;\RR)$.

\begin{lem}  
Fix a Whitney stratification and control data on $\oX$.  Fix $r$ with
$ 0 \le r \le n$ and let $c^r(\Theta)$ be the $r$-th Chern form on
$X$.  For any controlled differential form $\eta \in
\Omega^{2n-r}_{con}(\oX;\CC)$,
\[ 
\int_X c^r(\Theta) \wedge \eta < \infty
\]
\end{lem}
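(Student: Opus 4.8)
The plan is to reduce the finiteness of $\int_X c^r(\Theta)\wedge\eta$ to the integrability statement already established in the proof of Proposition~\ref{prop-main}, by pulling everything back to the resolution $\widetilde{X}$ of the Nash blowup. First I would recall that on $\widetilde X$ the pullback curvature $\widetilde\Theta$ is smooth and $c^r(\widetilde\Theta)$ agrees with $c^r(\Theta)$ on the open dense set $\pi^{-1}(X)\cong X$, whose complement has measure zero. Thus it suffices to show that $\eta$ pulls back to a form on $\widetilde X$ that is bounded (or at least $L^1$) against the smooth form $c^r(\widetilde\Theta)$, and since $\widetilde X$ is compact the integral is then automatically finite.

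The key step is therefore to understand how a controlled form $\eta\in\Omega^{2n-r}_{con}(\oX;\CC)$ behaves under the composite $b\circ\pi:\widetilde X\to\oX$. I would argue as follows. Stratify $\widetilde X$ compatibly with the given Whitney stratification of $\oX$, so that the map $b\circ\pi$ is a stratified map. On the top stratum $X$ the form $(b\circ\pi)^*\eta$ equals $\eta_X$, which is smooth there; near a point of a lower-dimensional stratum $\widetilde Y\subset\widetilde X$ lying over a stratum $A<X$ of $\oX$, the controlledness condition $\eta_X|_{T_A}=\pi_{XA}^*(\eta_A)$ says that $\eta_X$ extends, via the tubular projection, from the smooth form $\eta_A$ on $A$. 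Because control data are (by construction) smooth submersions and $b\circ\pi$ is a stratified map between compact spaces, the pullback $(b\circ\pi)^*\eta$ extends to a \emph{bounded} measurable $(2n-r)$-form on $\widetilde X$: its coefficients in any local chart are bounded, since they are obtained by composing the smooth (hence locally bounded) coefficients of $\eta_A$ with the smooth retractions $\pi_{XA}$. (This is precisely the point of introducing controlled forms — they have no blow-up of coefficients along the strata, only possible non-smoothness.) Then $c^r(\widetilde\Theta)\wedge(b\circ\pi)^*\eta$ is a bounded measurable $2n$-form on the compact manifold $\widetilde X$, hence integrable, and its integral computes $\int_X c^r(\Theta)\wedge\eta$ as above.

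I expect the main obstacle to be making the boundedness claim for $(b\circ\pi)^*\eta$ fully rigorous: the tubular projections $\pi_{XA}$ control $\eta_X$ only on the tube $T_A$ and only up to the compatibility with the ``distance function'' direction, so one must check that the pullback does not pick up unbounded contributions from the normal directions collapsed by $b\circ\pi$. The clean way to handle this is to observe that the collapsing happens along the exceptional locus of $b\circ\pi$, which meets only the lower strata, and on a neighborhood of such a point one can use the local product structure of the Whitney stratification together with the fact that a controlled form is pulled back from the base $A$ of the tube — so its pullback to $\widetilde X$ factors (on the relevant piece) through the smooth map to $A$ and is therefore smooth, a fortiori bounded, there. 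Since the exceptional locus has measure zero in $\widetilde X$, the possible non-smoothness along it does not affect integrability. Assembling these local estimates with a partition of unity on the compact space $\widetilde X$ yields the bound $\int_X c^r(\Theta)\wedge\eta<\infty$.
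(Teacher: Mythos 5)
Your proposal follows essentially the same route as the paper: both reduce to the resolution $\widetilde X$ of the Nash blowup used in Proposition~\ref{prop-main}, where $c^r(\widetilde\Theta)$ is smooth and $\widetilde X$ is compact, so everything hinges on the pullback of $\eta$ being integrable there. The paper settles that last point in one line --- a controlled form extends canonically to a smooth form on a neighborhood of $\oX$, hence pulls back to a smooth form $\tilde\eta$ on $\widetilde X$ --- which replaces your local boundedness estimates and sidesteps the worry about the directions collapsed by $b\circ\pi$.
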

\begin{proof}
Each controlled differential form $\eta$ extends canonically to a
smooth differential form in a neighborhood of $\oX$.  It follows
that if $\widetilde{X} \to \oX$ is a resolution of singularities then
$\eta$ pulls back to a smooth differential form $\tilde\eta$ on
$\widetilde{X}$.  Using the resolution of the Nash blowup as described
in Proposition \ref{prop-main} above, we conclude that
\[ 
\int_{X} c^r(\Theta) \wedge \eta = 
\int_{\widetilde{X}} c^r(\widetilde{\Theta})\wedge \tilde\eta 
< \infty.\qedhere
\]
\end{proof}

\subsection{}
Thus, each Chern form $c^r(\Theta)$ determines a homomorphism
$\Omega^{2n-r}_{con}(\oX) \to \CC$ that vanishes on boundaries, and
hence defines a class in the dual space $H^*(\oX;\CC)^* =
H_*(\oX;\CC)$, which is easily seen to agree with the Mather Chern
class.

\section{Other curvature integrals}\label{sec:MCnos}
\subsection{}
For any partition $I = i_1 + i_2 + \cdots + i_r = n$, the {\em Mather
  Chern number} is the integer $c^{M,I}(\oX)
=\deg \left(c^{i_1}(\xi)\cup \cdots \cup c^{i_r}(\xi) \cap [
  \widehat{X} ]\right)$.  

\begin{prop}\label{prop-integrals}
Let $I = i_1 + \cdots + i_r = n$ be a partition of $n$.  Then
\begin{equation}\label{eqn-partition}
\int_X c^{i_1}(\Theta) \wedge \cdots \wedge c^{i_r}(\Theta) 
= c^{M,I}(\oX).
\end{equation}
that is, the integral of this product of Chern forms equals the
corresponding Mather Chern number.
\end{prop}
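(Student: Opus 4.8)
The plan is to mimic exactly the argument of Proposition \ref{prop-main}, simply replacing the top Chern form $c^n(\Theta)$ by the product $c^{i_1}(\Theta)\wedge\cdots\wedge c^{i_r}(\Theta)$. The key observation is that, as in the proof of Proposition \ref{prop-main}, the product of Chern forms on $X$ extends to a smooth differential form on the resolution $\widetilde X$ of the Nash blowup, namely the corresponding product of Chern forms $c^{i_1}(\widetilde\Theta)\wedge\cdots\wedge c^{i_r}(\widetilde\Theta)$ of the connection $\widetilde\nabla$ on $\pi^*(\xi)$. Since $\widetilde X - X$ has measure zero, the integral over $X$ equals the integral over $\widetilde X$ of the extended form.

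First I would recall the Chern--Weil identity $[c^j(\widetilde\Theta)] = c^j(\pi^*\xi) = \pi^*c^j(\xi)$ in $H^{2j}(\widetilde X;\CC)$ for each $j$, which is precisely the input already used in Proposition \ref{prop-main}. Taking the cup product of these identities over the parts of the partition $I$ gives
\[
[c^{i_1}(\widetilde\Theta)\wedge\cdots\wedge c^{i_r}(\widetilde\Theta)]
= \pi^*\bigl(c^{i_1}(\xi)\cup\cdots\cup c^{i_r}(\xi)\bigr)
\in H^{2n}(\widetilde X;\CC).
\]
Since this total degree is $2n = \dim_\RR\widetilde X$, the integral over $\widetilde X$ of the left-hand form is simply the degree (evaluation against the fundamental class) of the right-hand cohomology class. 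Then I would apply the projection formula for the proper map $\pi:\widetilde X\to\widehat X$: because $\pi$ is birational (a resolution), $\pi_*[\widetilde X] = [\widehat X]$, so
\[
\deg\bigl(\pi^*(c^{i_1}(\xi)\cup\cdots\cup c^{i_r}(\xi))\cap[\widetilde X]\bigr)
= \deg\bigl(c^{i_1}(\xi)\cup\cdots\cup c^{i_r}(\xi)\cap[\widehat X]\bigr),
\]
which is by definition the Mather Chern number $c^{M,I}(\oX)$.

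Assembling these steps yields
\[
\int_X c^{i_1}(\Theta)\wedge\cdots\wedge c^{i_r}(\Theta)
= \int_{\widetilde X} c^{i_1}(\widetilde\Theta)\wedge\cdots\wedge c^{i_r}(\widetilde\Theta)
= c^{M,I}(\oX),
\]
which is \eqref{eqn-partition}. In particular the finiteness of the left-hand integral is a byproduct: the integrand extends smoothly across $\widetilde X$, which is compact. The only point requiring a little care — and the main (mild) obstacle — is justifying that the Chern form of the \emph{product} connection behaves as expected, i.e.\ that the product of Chern \emph{forms} on $X$ literally equals the pullback under $\pi$ of the product of Chern forms on a neighborhood, and hence extends; but this is immediate from naturality of Chern--Weil forms under the bundle map $\pi^*\xi\to\xi$ together with the fact that $\widetilde\nabla$ is by construction the pullback connection, so no genuinely new analysis is needed beyond what Proposition \ref{prop-main} already supplies. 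The passage from forms to cohomology and the use of the projection formula are then purely formal.
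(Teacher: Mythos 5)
Your proposal is correct and is exactly the argument the paper intends: the paper's entire proof of this proposition is the single remark that ``the proof is the same as that of Proposition \ref{prop-main},'' and your write-up simply carries out that same Chern--Weil plus projection-formula argument with the product $c^{i_1}(\widetilde\Theta)\wedge\cdots\wedge c^{i_r}(\widetilde\Theta)$ in place of the top Chern form. No discrepancies to report.
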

The proof is the same as that of Proposition \ref{prop-main}.\qed

Proposition~\ref{prop-integrals} is also proven in \cite{Buryak2}.

One can also define a product of any collection
$c^M_{i_1}(\oX), \dots, c^M_{i_r}(\oX)$ of Mather
Chern classes, as the push-forward
\[
b_*\left(c^{i_1}(\xi)\cup \cdots \cup c^{i_r}(\xi) 
\cap [ \widehat{X} ]\right)
\]
in $H_*(\oX;\CC)$.

\subsection{}
The (algebraic) degree $\deg(\alpha)$ of an integral homology class
$\alpha \in H_{2r}(\PP^N;\ZZ)$ is the unique integer such that $\alpha
= \deg(\alpha)[L^{r}] \in H_{2r}(\PP^N;\ZZ)$ where $L^{r}$ denotes a
codimension $r$ linear subspace of $\PP^N$.

\begin{prop}
Let $\omega$ denote the K\"ahler form on $X$ that is induced from the
ambient projective space.  Then the degree of the Mather Chern class
$c_i^M(\oX)$ is given by the integral,
\begin{equation}\label{eqn-degree}
\deg(c_i^M(\oX)) = \int_X c^{n-i}(\Theta) \wedge \omega^{i}.
\end{equation}
\end{prop}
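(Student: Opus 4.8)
The plan is to derive equation~\eqref{eqn-degree} directly from Proposition~\ref{prop-integrals} by making a judicious choice of partition, together with the standard Chern--Weil description of the K\"ahler form. The key observation is that the K\"ahler form $\omega$ on $X$ induced from the Fubini--Study metric represents the restriction of the hyperplane class, and more precisely, on the Nash blowup $\widehat X$ the pullback $b^*\omega$ represents $b^*c_1(\OO_{\PP^N}(1)) \cap [\widehat X]$ in the appropriate sense; equivalently, after pulling back to the resolution $\widetilde X$, the form $\pi^*b^*\omega$ is a smooth closed $2$-form representing $(b\pi)^*h$ where $h \in H^2(\PP^N;\ZZ)$ is the hyperplane class.

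First I would reduce to the case already handled: the integral $\int_X c^{n-i}(\Theta)\wedge\omega^i$ makes sense because, after passing to the resolution $\widetilde X$ of the Nash blowup as in Proposition~\ref{prop-main}, both $c^{n-i}(\Theta)$ and $\omega$ extend to smooth forms $c^{n-i}(\widetilde\Theta)$ and $\widetilde\omega := \pi^*b^*\omega$ on the compact manifold $\widetilde X$, and $\widetilde X \setminus X$ has measure zero. Hence
\[
\int_X c^{n-i}(\Theta)\wedge\omega^i = \int_{\widetilde X} c^{n-i}(\widetilde\Theta)\wedge\widetilde\omega^{\,i}.
\]
Next, $[\widetilde\omega] = (b\pi)^*h$ in $H^2(\widetilde X;\RR)$, so the right-hand integral equals $\langle c^{n-i}(\pi^*\xi)\cup (b\pi)^*h^i,\, [\widetilde X]\rangle$. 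By the projection formula applied to $b\pi$, this is $\langle b_*\pi_*\big(c^{n-i}(\pi^*\xi)\cap[\widetilde X]\big),\, h^i\rangle$, and by the computation in the proof of Proposition~\ref{prop-main} the class $b_*\pi_*\big(c^{n-i}(\pi^*\xi)\cap[\widetilde X]\big)$ is exactly $c^M_{i}(\oX) \in H_{2i}(\oX;\ZZ) \hookrightarrow H_{2i}(\PP^N;\ZZ)$. Pairing a class in $H_{2i}(\PP^N)$ against $h^i$ returns precisely its algebraic degree in the sense defined in the preceding subsection, since $h^i$ is Poincar\'e dual to a linear subspace $L^{N-i}$ and $\langle [L^i]_{H_{2i}},\, h^i\rangle = 1$. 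This yields $\int_X c^{n-i}(\Theta)\wedge\omega^i = \deg(c^M_i(\oX))$, as claimed.

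The main obstacle, and the only point requiring genuine care, is justifying that $\widetilde\omega = \pi^*b^*\omega$ is a globally smooth form on $\widetilde X$ representing the integral class $(b\pi)^*h$, and that the degenerate locus where $\widetilde\omega$ fails to be K\"ahler (or even where the pullback of the metric degenerates along the exceptional fibers of $b\pi$) causes no trouble. This is handled exactly as the smoothness assertion for $\Pf(\widetilde\Theta_\RR)$ in Proposition~\ref{prop-main}: the Fubini--Study form $\omega$ is a smooth closed form on all of $\PP^N$, so its pullback under the morphism $b\pi:\widetilde X \to \PP^N$ is automatically smooth and closed on $\widetilde X$, and its de Rham class is the pullback of the integral hyperplane class by naturality of Chern--Weil. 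No new analytic input beyond Proposition~\ref{prop-main} is needed; the remaining steps are the projection formula and the elementary pairing computation in $\PP^N$.
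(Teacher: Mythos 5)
Your argument is correct and is essentially the paper's own proof, which simply invokes the fact that $[\omega]$ is Poincar\'e dual to the class of a linear subspace and leaves the rest (passage to the resolution of the Nash blowup, extension of the forms, projection formula) implicit from the proofs of Propositions \ref{prop-main} and \ref{prop-integrals}. You have merely written out those implicit steps in full, which is fine.
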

Equation (\ref{eqn-degree}) follows from the fact that the class
$[\omega]\in H^2(\PP^N)$ represented by the K\"ahler form $\omega$ is
the Poincar\'e dual of the homology class $[L]\in H_2(\PP^N)$
represented by a hyperplane.  \qed

\subsection{}  
The numbers $\deg(c_i^M(\oX))$ may be used to determine the
Gauss Bonnet integrals
\[
\beta_r = \int_{X \cap L^r} \Pf(\Theta_r)
\]
over a generic codimension $r$ linear section $\oX \cap L^r$,
and vice versa.  (Here, $\Theta_r$ is the curvature form on $X \cap
L^r$.) Let
\[ 
\CM(t) = \sum_{r \ge 0} \deg(c^M_r(\oX))t^r
\]
be the Mather Chern (Poincar\'e) polynomial and let
\[ 
\Pf(t) = \sum_{r \ge 0} \beta_r(-t)^r
\]
be the Pfaffian polynomial.  Define an involution on the set of
polynomials of fixed degree, $p \mapsto \mathcal I(p)$ by
\[ 
\mathcal I(p)(t) = \frac{ p(0) + t\, p(-1-t)}{1+t}.
\]

\begin{prop}
The involution $\mathcal I$ interchanges the Mather Chern
polynomial with the Pfaffian polynomial:
\[ 
\CM = \mathcal I(\Pf)\ \text{ and }\ \Pf = \mathcal I(\CM).
\]
\end{prop}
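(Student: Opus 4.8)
The plan is to realize each Pfaffian integral $\beta_r$ as $\deg c_0^M$ of a generic codimension-$r$ linear section, transport that number to the Nash blowup $b\colon\widehat X\to\oX$, and so reduce the statement to a binomial identity among the Mather Chern numbers $\gamma_i:=\deg c_i^M(\oX)$.

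I would first dispose of two preliminaries. The map $\mathcal I$ is an involution on polynomials of fixed degree: the numerator $p(0)+t\,p(-1-t)$ vanishes at $t=-1$, so $1+t$ divides it, $\deg\mathcal I(p)=\deg p$, and with $q=\mathcal I(p)$ one checks $q(0)=p(0)$ and then $\mathcal I(q)=p$ by direct substitution. Hence it is enough to prove $\Pf=\mathcal I(\CM)$. Next, Proposition~\ref{prop-main} applied to the projective variety $\oX\cap L^r\subset L^r\cong\PP^{N-r}$ --- whose smooth part is $X\cap L^r$ and whose induced Fubini--Study curvature is the form $\Theta_r$ --- gives $\beta_r=\deg c_0^M(\oX\cap L^r)$ for generic $L^r$.

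The geometric core is to compute $\deg c_0^M(\oX\cap L^r)$ on $\widehat X$. Put $\OO(1):=b^*\OO_{\PP^N}(1)$ and $h:=c_1(\OO(1))=b^*[\omega]$. For generic $L^r=H_1\cap\cdots\cap H_r$ I would show that the Nash blowup of $\oX\cap L^r$ is the proper transform $W_r\subset\widehat X$ of $\oX\cap L^r$; that its tautological $(n-r)$-plane bundle is $\xi^{(r)}:=\ker\bigl(\xi|_{W_r}\to\OO(1)^{\oplus r}|_{W_r}\bigr)$, the map being evaluation of the differentials of the linear forms cutting out the $H_j$; and that $[W_r]=h^r\cap[\widehat X]$ in $\widehat X$. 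Over $X\cap L^r$ the bundle $\xi^{(r)}$ is $T(X\cap L^r)$ and its classifying map is the Gauss map of $\oX\cap L^r$, so $W_r$ maps properly and birationally onto $\widehat{\oX\cap L^r}$ compatibly with tautological bundles. The projection formula together with $c(\xi^{(r)})=c(\xi)\,(1+h)^{-r}$ on $W_r$ then gives
\[
\beta_r \;=\; \int_{W_r} c^{n-r}(\xi^{(r)}) \;=\; \int_{\widehat X}\bigl[\,c(\xi)\,(1+h)^{-r}\,\bigr]_{2(n-r)}\cdot h^{r},
\]
where $\bigl[\,\cdot\,\bigr]_{2(n-r)}$ is the component of cohomological degree $2(n-r)$. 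This identification of the Nash blowup of a generic linear section with a proper transform, and of its tautological bundle with the displayed kernel, is the step I expect to cost the most work: it rests on Kleiman-type transversality for the stratified Gauss map of $\oX$ and on a Bertini argument for the globally generated bundle $b^*\OO_{\PP^N}(1)$, ensuring that $r$ generic sections cut out an irreducible subvariety with the stated fundamental class.

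The remainder is formal. Expanding $(1+h)^{-r}=\sum_{k\ge0}(-1)^k\binom{r+k-1}{k}h^k$ and substituting $\gamma_i=\int_{\widehat X}c^{n-i}(\xi)\,h^i$ (which is equation~\eqref{eqn-degree} pushed forward to $\widehat X$) collapses the integral above to $\beta_r=\sum_{i\ge r}(-1)^{i-r}\binom{i-1}{r-1}\gamma_i$ for $r\ge1$, with $\beta_0=\gamma_0$ (and, consistently, $\beta_n=\deg\oX=\gamma_n$). Feeding this into $\Pf(t)=\sum_{r\ge0}\beta_r(-t)^r$ and evaluating the inner sum $\sum_{r=1}^{i}(-t)^r(-1)^{i-r}\binom{i-1}{r-1}=(-1)^i\,t\,(1+t)^{i-1}$ for each $i\ge1$ gives $\Pf(t)=\gamma_0+t\sum_{i\ge1}(-1)^i\gamma_i(1+t)^{i-1}$. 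On the other hand $\mathcal I(\CM)(t)=\bigl(\gamma_0+t\sum_{i\ge0}\gamma_i(-1)^i(1+t)^i\bigr)/(1+t)=\gamma_0+t\sum_{i\ge1}(-1)^i\gamma_i(1+t)^{i-1}$ --- the same polynomial. Hence $\Pf=\mathcal I(\CM)$, and since $\mathcal I$ is an involution, $\CM=\mathcal I(\Pf)$ as well.
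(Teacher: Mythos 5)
Your proposal is correct and follows essentially the route the paper intends: the paper's proof is a pointer to Theorem~1.1 of \cite{Al2} together with the good behavior of Mather Chern classes under generic linear sections (Lemma~1.2 of \cite{Parusinski}), and your argument simply unpacks those two ingredients --- proving the section lemma directly on the Nash blowup via $c(\xi^{(r)})=c(\xi)(1+h)^{-r}$ and $[W_r]=h^r\cap[\widehat X]$, and then verifying the binomial identity that realizes the involution $\mathcal I$. The combinatorics checks out (modulo the paper's own sloppiness about the $(2\pi)^{-(n-r)}$ normalization in the definition of $\beta_r$), so no further comment is needed.
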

The proof follows the same lines as that of Theorem~1.1 in \cite{Al2};
the key technical step is the good behavior of the Mather Chern class
with respect to generic linear sections, which follows for example
from Lemma~1.2 in~\cite{Parusinski}.\qed

\section{Behavior in families}\label{sec-families}

\subsection{}
We now consider the behavior of the Pfaffian integral along a
smoothing family for~$\oX$.  Localizing the integral in tubular
neighborhoods of components of the singular locus of~$\oX$ in a
smoothing family leads to expressions for invariants of the
singularity in terms of Pfaffian integrals.

As above, $\oX$ denotes a projective variety of dimension~$n$, with
nonsingular part~$X$. We will let $S:=\oX\smallsetminus X$ be the
singular locus of $\oX$.  Suppose that $\oX=\oX_0$ is the central
fiber of a flat family $\ocaX \to D$ over a disk $D\subseteq \CC$
centered at $0$. We let $\oX_\delta=\pi^{-1}(\delta)$ denote the fiber
over $\delta\in D$ and we assume that $\oX_\delta$ is nonsingular for
all $\delta\ne 0$; thus $\ocaX$ is a smoothing of $X$.  We will
further assume that $\ocaX\subseteq \PP^N$ is projective.  It follows from the
first isotopy lemma of R.~Thom (\cite{Mather}) that
$\pi$ is topologically locally trivial over $D\smallsetminus \{0\}$.

A metric is induced on the tangent spaces to $X$ as above, and to
every $\oX_\delta$ for $\delta\ne 0$ in a neighborhood of $0$. We have
an induced connection $\nabla_\delta$ on each $\oX_\delta$, with
curvature $\Theta_\delta\in \Omega^2(\oX_\delta;
\End(T\oX_\delta))$ and Pfaffian 
$\Pf(\Theta_{\RR,\delta})\in \Omega^{2n}(\oX_\delta; \RR)$. 

\begin{prop}\label{prop:EuVer}
Denote by $N_\epsilon(S)$ the $\epsilon$-tubular neighborhood of $S$.
Then 
\[
\lim_{\epsilon\to 0}\, \lim_{\delta\to 0}\, (2\pi)^{-\dim\oX}
\int_{\oX_\delta\cap N_\epsilon(S)} \Pf(\Theta_{\RR,\delta})
=\sum_{Y<X} \left(\sigma_y({\ocaX})-\Eu_y(\oX)\right) \chi(Y)\quad.
\]
\end{prop}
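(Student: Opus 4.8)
The plan is to write the Gauss--Bonnet integral over each smooth fibre $\oX_\delta$, $\delta\neq 0$, as the sum of a ``near $S$'' part and an ``away from $S$'' part, evaluate the two outer limits of the latter, and then compare with Theorem~\ref{thm-integral} applied to $X$ together with the specialization formula for Euler characteristics. Since $\oX_\delta$ is compact and nonsingular for small $\delta\neq 0$, the classical Gauss--Bonnet theorem gives $(2\pi)^{-n}\int_{\oX_\delta}\Pf(\Theta_{\RR,\delta})=\chi(\oX_\delta)$, and topological local triviality of $\pi$ over $D\smallsetminus\{0\}$ makes this independent of $\delta$; call it $\chi_{\mathrm{gen}}$. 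Writing $\int_{\oX_\delta}=\int_{\oX_\delta\cap N_\epsilon(S)}+\int_{\oX_\delta\smallsetminus N_\epsilon(S)}$, the existence of the inner limit in the Proposition and its value will drop out once we understand $\lim_{\delta\to 0}\int_{\oX_\delta\smallsetminus N_\epsilon(S)}\Pf(\Theta_{\RR,\delta})$.

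For this I would first observe that $\ocaX$ is nonsingular along $X$: the central fibre $\oX_0$ is a Cartier divisor in $\ocaX$ which is nonsingular on $X=\oX\smallsetminus S$, so $\ocaX$ is nonsingular in a neighborhood of $X$ and $\pi$ is a submersion there. For all but a measure-zero set of $\epsilon$ the boundary $\partial N_\epsilon(S)$ meets the fibres of $\pi$ transversally (Sard), and for such $\epsilon$ the restriction of $\pi$ to a suitable neighborhood of the compact region $\oX_0\smallsetminus N_\epsilon(S)$ is a proper submersion, hence $C^\infty$-trivial by Ehresmann's theorem. Moreover the induced Fubini--Study metrics on the fibres, and therefore the Levi--Civita connections and curvature forms, depend smoothly on $\delta$, being smooth functions of the embeddings $\oX_\delta\hookrightarrow\PP^N$. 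Hence $\Pf(\Theta_{\RR,\delta})\to\Pf(\Theta_{\RR,0})$ uniformly on this region, and
\[
\lim_{\delta\to 0}\,(2\pi)^{-n}\int_{\oX_\delta\smallsetminus N_\epsilon(S)}\Pf(\Theta_{\RR,\delta})=(2\pi)^{-n}\int_{X\smallsetminus N_\epsilon(S)}\Pf(\Theta_{\RR}).
\]

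Now I would let $\epsilon\to 0$. As in the proof of Proposition~\ref{prop-main}, $\Pf(\Theta_{\RR})$ agrees on $X$ with a smooth form on the compact resolution $\widetilde{X}$, hence is integrable on $X$; since $X\smallsetminus N_\epsilon(S)$ increases to $X$ as $\epsilon\to 0$ (because $S\cap X=\emptyset$), dominated convergence and Theorem~\ref{thm-integral} give $\lim_{\epsilon\to 0}(2\pi)^{-n}\int_{X\smallsetminus N_\epsilon(S)}\Pf(\Theta_{\RR})=\chi(X)+\sum_{Y<X}\Eu_y(\oX)\chi(Y)$. Combining with the displayed identity,
\[
\lim_{\epsilon\to 0}\lim_{\delta\to 0}\,(2\pi)^{-n}\int_{\oX_\delta\cap N_\epsilon(S)}\Pf(\Theta_{\RR,\delta})=\chi_{\mathrm{gen}}-\chi(X)-\sum_{Y<X}\Eu_y(\oX)\chi(Y).
\]
Finally I would invoke the specialization formula for the Euler characteristic: for a Whitney stratification of $\oX$ adapted to the family, $\chi_{\mathrm{gen}}=\sum_{Z}\sigma_z(\ocaX)\,\chi(Z)$, where $\sigma_z(\ocaX)$ is the Euler characteristic of the local Milnor fibre of $\pi$ at $z$ (the value at $z$ of Verdier's specialization of the constant function~$\one$); since $\pi$ is submersive along the open stratum $X$ we have $\sigma_x(\ocaX)=1$, so $\chi_{\mathrm{gen}}=\chi(X)+\sum_{Y<X}\sigma_y(\ocaX)\,\chi(Y)$. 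Substituting this into the previous display gives exactly the asserted identity. \emph{The main obstacle} is the interchange of $\lim_{\delta\to 0}$ with integration over the moving domain $\oX_\delta\smallsetminus N_\epsilon(S)$ in the second paragraph: one must combine the nonsingularity of $\ocaX$ along $X$, a generic choice of $\epsilon$, Ehresmann's fibration theorem, and the continuous dependence of the curvature forms on $\delta$. The remaining ingredients---Gauss--Bonnet on the smooth fibres, integrability via resolution, and the specialization formula---are either elementary or standard and citable.
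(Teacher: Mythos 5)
Your proposal is correct and follows essentially the same route as the paper: split the Gauss--Bonnet integral over $\oX_\delta$ into the parts inside and outside $N_\epsilon(S)$, identify the double limit of the outside part with $(2\pi)^{-n}\int_X\Pf(\Theta_\RR)=\sum_Y\Eu_y(\oX)\chi(Y)$, and compare with $\chi(\oX_\delta)=\sum_Y\sigma_y(\ocaX)\chi(Y)$ via Verdier specialization. The only difference is that the paper simply asserts the continuity in $\delta$ of the integral over $\oX_\delta\smallsetminus N_\epsilon(S)$, whereas you justify it via Ehresmann's theorem and smooth dependence of the induced metrics --- a welcome elaboration, not a divergence.
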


As above, the sum is over the strata $Y$ contained in $S$, and
$\Eu_y(\oX)$ denotes the (common) value of the local Euler obstruction
at points $y$ of the stratum $Y$. Similarly, $\sigma_y(\ocaX)$ denotes
the value at $y\in Y$ of Verdier's {\em specialization function\/}
$\sigma(\ocaX)=\sigma_*(\one_{\ocaX})$, see~\cite[\S3]{Verdier}.  (We
assume that $\sigma(\ocaX)$ is locally constant along strata.)
Explicitly,
\[
\sigma_y(\ocaX)=\lim_{\epsilon\to 0} \lim_{\delta\to 0} \chi(\oX_\delta\cap B_\epsilon(y))
\]
where $B_\epsilon(y)$ is the $\epsilon$-ball centered at $y$.  Note
that both $\Eu(\oX)$ and $\sigma(\ocaX)$ equal $1$ away from~$S$.
Therefore their difference is supported on $S$.

If $\ocaX\subseteq \overline{\mathcal M}$ may be realized as a
hypersurface in a flat family $\overline{\mathcal M} \to D$ with
nonsingular fibers, then every fiber $\oX_\delta$, $\delta\in D$ is
itself a hypersurface of a nonsingular variety. If $\oX=\oX_0$ has
isolated singularities, then Proposition~\ref{prop:EuVer} admits the
following particularly explicit formulation,
which recovers a formula of R.~Langevin (\cite[Theorem 1]{Langevin}):

\begin{cor}\label{cor:Milnor}
In the situation considered above, assume that $\oX$, $\oX_\delta$ are
hypersurfaces of nonsingular varieties and that $\dim S=0$. Then
\[
\lim_{\epsilon\to 0}\, \lim_{\delta\to 0}\, (2\pi)^{-\dim\oX}
\int_{\oX_\delta\cap N_\epsilon(S)} \Pf(\Theta_{\RR,\delta})
=(-1)^{\dim \oX}\sum_{p\in S}\big(\mu_\oX(p)+\mu_{\oX\cap H}(p)\big)\quad,
\]
where $\mu(p)$ denotes the Milnor number at $p$ and $H$ is a general
hyperplane through $p$.
\end{cor}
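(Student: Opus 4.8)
The plan is to deduce Corollary~\ref{cor:Milnor} directly from Proposition~\ref{prop:EuVer} by specializing the two constructible functions appearing on the right-hand side to the case of an isolated hypersurface singularity. When $\dim S=0$, the sum $\sum_{Y<X}$ reduces to a finite sum over the isolated singular points $p\in S$, with $\chi(Y)=\chi(\{p\})=1$, so the right side of Proposition~\ref{prop:EuVer} becomes $\sum_{p\in S}\bigl(\sigma_p(\ocaX)-\Eu_p(\oX)\bigr)$. It thus suffices to show that for each isolated hypersurface point,
\[
\sigma_p(\ocaX)-\Eu_p(\oX)=(-1)^{\dim\oX}\bigl(\mu_\oX(p)+\mu_{\oX\cap H}(p)\bigr).
\]

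First I would compute $\Eu_p(\oX)$. For a hypersurface $\oX$ in a nonsingular variety with an isolated singularity at $p$, there is a classical formula of Lê--Teissier (or, in this context, the well-known expression via polar multiplicities) giving
\[
\Eu_p(\oX)=1+(-1)^{\dim\oX}\mu_{\oX\cap H}(p),
\]
where $H$ is a general hyperplane through $p$; this is exactly the content of the reference to Lemma~1.2 in~\cite{Parusinski} cited earlier, combined with the fact that the Euler obstruction of a nonsingular point is $1$. Second, I would compute $\sigma_p(\ocaX)$ from its explicit description as $\lim_{\epsilon\to0}\lim_{\delta\to0}\chi(\oX_\delta\cap B_\epsilon(p))$. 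Here $\oX_\delta\cap B_\epsilon(p)$ is the Milnor fiber of the isolated hypersurface singularity, so by the classical Milnor fibration theorem its reduced homology is concentrated in the middle dimension with rank $\mu_\oX(p)$, whence
\[
\sigma_p(\ocaX)=1+(-1)^{\dim\oX}\mu_\oX(p).
\]
Subtracting the two displays gives $\sigma_p(\ocaX)-\Eu_p(\oX)=(-1)^{\dim\oX}\bigl(\mu_\oX(p)+\mu_{\oX\cap H}(p)\bigr)$, and summing over $p\in S$ yields the Corollary.

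The main obstacle is not any single hard estimate but rather assembling the correct normalizations and sign conventions for the two invariants so that they match the stated formula; in particular one must be careful that $\dim\oX=n$ (the complex dimension), that the Milnor fiber computation uses the \emph{reduced} Euler characteristic, and that the ``general hyperplane through $p$'' in the Euler obstruction formula is the same as the one appearing in $\mu_{\oX\cap H}(p)$. I would also need to confirm that for an isolated hypersurface singularity the hypothesis of Proposition~\ref{prop:EuVer} that $\sigma(\ocaX)$ be locally constant along strata is automatically satisfied (it is, since the only singular stratum near $p$ is the point itself), and that the smoothing $\ocaX\to D$ restricted to a small ball about $p$ is indeed a Milnor fibration, which follows from the flatness of the family together with the hypersurface assumption. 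Finally, I would remark that the resulting identity is precisely Langevin's formula in~\cite[Theorem~1]{Langevin}, thereby recovering that classical result.
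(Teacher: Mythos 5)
Your overall route is exactly the paper's: reduce Corollary~\ref{cor:Milnor} to Proposition~\ref{prop:EuVer} by evaluating $\sigma_p(\ocaX)$ and $\Eu_p(\oX)$ at an isolated hypersurface singularity, using $\chi(\{p\})=1$. Your computation of $\sigma_p(\ocaX)=1+(-1)^{\dim\oX}\mu_\oX(p)$ via the Milnor fibration is correct; the paper instead invokes \cite[Proposition 5.1]{ParuPra}, but the content is the same.

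However, your formula for the Euler obstruction has the wrong sign. The correct statement, due to Piene (cf.\ also the Remarque on p.~240 of \cite{Dubson}), is
\[
\Eu_p(\oX)=1-(-1)^{\dim\oX}\mu_{\oX\cap H}(p),
\]
not $1+(-1)^{\dim\oX}\mu_{\oX\cap H}(p)$. (Sanity check: for a plane curve singularity of multiplicity $m$ one has $\mu_{\oX\cap H}(p)=m-1$ and $\Eu_p(\oX)=m$; your sign would give $2-m$, which already fails for a node.) Moreover, with the two displays as you wrote them, the subtraction actually yields $(-1)^{\dim\oX}\bigl(\mu_\oX(p)-\mu_{\oX\cap H}(p)\bigr)$, not the sum appearing in the Corollary; you reach the stated answer only because a second sign slip in the subtraction cancels the first. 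With the corrected Euler obstruction formula the subtraction does give $(-1)^{\dim\oX}\bigl(\mu_\oX(p)+\mu_{\oX\cap H}(p)\bigr)$, and the rest of your argument (including the remarks on local constancy of $\sigma(\ocaX)$ near an isolated singular point) is fine. A minor point of attribution: this Euler obstruction formula is not what Lemma~1.2 of \cite{Parusinski} provides --- that reference is cited earlier for the behavior of Mather Chern classes under generic linear sections --- the relevant sources are \cite{Piene} and \cite{Dubson}.
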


In particular, the left-hand-side is independent of the smoothing.

\subsection{}
We prove Proposition~\ref{prop:EuVer} and Corollary~\ref{cor:Milnor}.

\begin{proof}[Proof of Proposition~\ref{prop:EuVer}.]
Consider the complements $\oX_\delta\smallsetminus N_\epsilon(S)$ of
the tubular neighborhood. For all positive $\epsilon$ near $0$ the
integral
\[
\int_{\oX_\delta\smallsetminus N_\epsilon(S)} \Pf(\Theta_{\RR,\delta})
\]
is a continuous function of $\delta$ near $0$, therefore
\[
\lim_{\epsilon\to 0}\, \lim_{\delta\to 0}\, (2\pi)^{-\dim\oX}
\int_{\oX_\delta\smallsetminus N_\epsilon(S)} \Pf(\Theta_{\RR,\delta})
=\lim_{\epsilon\to 0} \,
(2\pi)^{-\dim \oX}\int_{\oX\smallsetminus N_\epsilon(S)} \Pf(\Theta_{\RR, 0})
=\deg c_0^M(\oX)
\]
by Proposition~\ref{prop-main}.  On the other hand, let
$\chi=\chi(X_\delta)$ for any $\delta$ near $0$.  (By hypothesis, this
number does not depend on $\delta$.) Then
\[
\lim_{\delta\to 0}\, (2\pi)^{-\dim \oX}
\int_{\oX_\delta} \Pf(\Theta_{\RR,\delta})
=\chi
\]
by the ordinary Gauss Bonnet formula. Since $\oX_\delta\cap
N_\epsilon(S) =\oX_\delta \smallsetminus
\left(\oX_\delta\smallsetminus N_\epsilon(S)\right)$, this gives
\[
\lim_{\epsilon\to 0}\, \lim_{\delta\to 0}\, (2\pi)^{-\dim \oX}
\int_{\oX_\delta\cap N_\epsilon(S)} \Pf(\Theta_{\RR,\delta})
=\chi-\deg c_0^M(X)
\]
Arguing as in~\S\ref{ss:Eu},
\begin{equation}\label{eq:c0M}
\deg c_0^M(\oX) =\sum_Y \Eu_y(\oX) \chi(Y)\quad.
\end{equation}
By Verdier's specialization theorem \cite[Th\'eor\`eme 5.1]{Verdier},
we have, for $\delta$ near $0$:
\begin{equation}\label{eq:sigma}
\chi = \deg \sigma_* c_*(\one_{\oX_\delta})
=\deg c_* \sigma_* (\one_{\ocaX})
=\deg c_*(\sum_Y \sigma_y(\ocaX) \one_Y) 
=\sum_Y \sigma_y(\ocaX) \chi(Y)\quad.
\end{equation}
Identities~\eqref{eq:c0M} and~\eqref{eq:sigma} complete the proof of
Proposition~\ref{prop:EuVer}.
\end{proof}

\begin{proof}[Proof of Corollary~\ref{cor:Milnor}]
For an isolated hypersurface singularity, 
\[
\sigma_\ocaX(p) = 1 + (-1)^{\dim\oX} \mu_{\oX} (p)
\]
as a consequence of~\cite[Proposition 5.1]{ParuPra}. On the other
hand,
\[
\Eu_\oX(p) = 1 - (-1)^{\dim\oX} \mu_{\oX\cap H}(p)
\]
as proven in~\cite{Piene} (also cf.~\cite[Remarque, p.~240]{Dubson}).
The formula in Corollary~\ref{cor:Milnor} follows then immediately from 
Proposition~\ref{prop:EuVer}.
\end{proof}

\subsection{}
Assume now that $\oX$ is a hypersurface in a nonsingular projective
variety, with a single singular point $p$, and that $\ocaX \to D$ is a
smoothing family as above.
If $H$ is a general hyperplane through $p$, then $\ocaX\cap H$ is a
smoothing family for $\oX\cap H$ over (a possibly smaller disk) $D$,
satisfying the same hypotheses as $\ocaX$.  (In fact the set of
parameter values $z\in D\smallsetminus \{0\}$ such that $H$ fails
to be transversal to the fiber $\overline{X}_z = \pi^{-1}(z)$ is
algebraic, hence finite.  By shrinking $D$ if necessary we may 
assume that it contains no such points.)
Iterating, we can find general hyperplanes $H_1,\dots, H_{\dim\oX-1}$
through $p$ such that
\[
\ocaX^{(r)}:=\ocaX\cap H_1\cap\cdots\cap H_r
\]
satisfies the same hypotheses for $r=1,\dots, \dim\oX-1$ (perhaps at
the price of further reducing the radius of the disk $D$).

Assume this is the case and let $\ocaX^{(0)}=\ocaX$. We have fibers
$\oX_\delta^{(r)}$ and corresponding Pfaffians
$\Pf(\Theta_{\RR,\delta}^{(r)})$ for $r=0,\dots, \dim\oX-1$.  We let
$H$ be one further general hyperplane; the intersection
\[
\oX^{(\dim\oX)}=\oX\cap H_1\cap\cdots\cap H_{\dim\oX-1}\cap H
\]
is then zero-dimensional. It consists of an $m$-tuple point at $p$,
where $m$ is the multiplicity of $\oX$ at $p$, and of a set of reduced
points.

\begin{cor}\label{cor:oneMilnor}
Let $\oX$ be a hypersurface in a projective nonsingular variety;
assume $\oX$ has an isolated singularity $p$, of multiplicity $m$ and
Milnor number $\mu_\oX(p)$.  Then with notation as above:
\begin{equation}\label{eq:milnor}
\lim_{\epsilon\to 0}\, \lim_{\delta\to 0}\, \sum_{i=1}^{\dim \oX} (2\pi)^{-\dim\oX+i-1}
\int_{\oX_\delta^{(i-1)}\cap N_\epsilon(S)} \Pf(\Theta_{\RR,\delta}^{(i-1)})
=(-1)^{\dim \oX}\mu_{\oX}(p)-m+1\quad.
\end{equation}
\end{cor}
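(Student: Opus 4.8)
The plan is to telescope Corollary~\ref{cor:Milnor} over the iterated hyperplane sections $\ocaX^{(i-1)}$ and collect the Milnor-number contributions. First I would apply Corollary~\ref{cor:Milnor} to each family $\ocaX^{(i-1)} \to D$, for $i = 1, \dots, \dim\oX$. By the setup preceding the statement, each $\ocaX^{(i-1)}$ is a smoothing family of a hypersurface $\oX^{(i-1)}$ in a nonsingular projective variety (of dimension $\dim\oX - (i-1)$), with isolated singularity at the single point $p$ (the reduced points in $\oX^{(\dim\oX)}$ are nonsingular, so they contribute nothing to the singular locus $S$ of any $\oX^{(i-1)}$ with $i-1 < \dim\oX$; for $i-1 = \dim\oX-1$ the only singular point is still $p$, since a general hyperplane through $p$ meets the smooth locus transversally). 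Hence Corollary~\ref{cor:Milnor} gives, writing $d_i = \dim\oX^{(i-1)} = \dim\oX - i + 1$,
\[
\lim_{\epsilon\to 0}\lim_{\delta\to 0}(2\pi)^{-d_i}\int_{\oX^{(i-1)}_\delta\cap N_\epsilon(S)}\Pf(\Theta^{(i-1)}_{\RR,\delta}) = (-1)^{d_i}\bigl(\mu_{\oX^{(i-1)}}(p) + \mu_{\oX^{(i-1)}\cap H}(p)\bigr).
\]
Since $\oX^{(i-1)}\cap H = \oX^{(i)}$ (a general hyperplane through $p$), the term $\mu_{\oX^{(i-1)}\cap H}(p)$ equals $\mu_{\oX^{(i)}}(p)$.

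Next I would multiply the $i$-th identity by $(2\pi)^{i-1}$ so that the prefactor becomes $(2\pi)^{-\dim\oX}$... wait, more precisely $(2\pi)^{-d_i + (i-1)} \cdot (2\pi)^{i-1} $ — I would simply note that $(2\pi)^{-d_i} = (2\pi)^{-\dim\oX + i - 1}$, which is exactly the prefactor appearing in the $i$-th summand of~\eqref{eq:milnor}. Summing over $i = 1, \dots, \dim\oX$ and observing that each sign $(-1)^{d_i} = (-1)^{\dim\oX - i + 1}$, the right-hand side becomes
\[
\sum_{i=1}^{\dim\oX} (-1)^{\dim\oX - i + 1}\bigl(\mu_{\oX^{(i-1)}}(p) + \mu_{\oX^{(i)}}(p)\bigr).
\]
This is a telescoping alternating sum: the $\mu_{\oX^{(i)}}(p)$ from the $i$-th term and the $\mu_{\oX^{(i)}}(p)$ appearing as $\mu_{\oX^{((i+1)-1)}}(p)$ in the $(i+1)$-st term carry opposite signs and cancel. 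What survives is the first term, $(-1)^{\dim\oX}\mu_{\oX^{(0)}}(p) = (-1)^{\dim\oX}\mu_\oX(p)$, together with the last term $(-1)^{\dim\oX - \dim\oX + 1}\mu_{\oX^{(\dim\oX)}}(p) = -\mu_{\oX^{(\dim\oX)}}(p)$.

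The remaining point is to identify $\mu_{\oX^{(\dim\oX)}}(p)$. Here $\oX^{(\dim\oX)}$ is zero-dimensional, consisting of an $m$-tuple point at $p$ plus reduced points; the Milnor number of a fat point of multiplicity $m$ on a (one-dimensional ambient, i.e.\ a line section) — concretely, $\oX^{(\dim\oX-1)}$ is a plane curve with an isolated singularity at $p$ and $\oX^{(\dim\oX)} = \oX^{(\dim\oX-1)}\cap H$ is its intersection with a general line, so $\mu_{\oX^{(\dim\oX)}}(p)$ is the Milnor number of $m$ points on a line, namely $m - 1$. Substituting $\mu_{\oX^{(\dim\oX)}}(p) = m - 1$ yields the right-hand side $(-1)^{\dim\oX}\mu_\oX(p) - m + 1$, as claimed. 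The main obstacle I anticipate is the bookkeeping at the very bottom of the iteration: verifying cleanly that the convention "$\mu_{\oX\cap H}$ for $H$ a general hyperplane through $p$" really does equal $m-1$ when the ambient dimension has dropped to one, i.e.\ that the Milnor number of the zero-scheme cut out on a curve by a general line through a point of multiplicity $m$ is $m-1$; this is standard (the Milnor number of $x^m$ at the origin on the line is $m-1$), but it needs to be stated precisely so that the telescoping closes correctly. A secondary routine check is that passing to general hyperplane sections does not create new singularities away from $p$, which is already arranged in the paragraph preceding the corollary by shrinking $D$.
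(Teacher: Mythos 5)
Your proposal is correct and follows essentially the same route as the paper: apply Corollary~\ref{cor:Milnor} to each family $\ocaX^{(i-1)}$, note that $\oX^{(i-1)}\cap H=\oX^{(i)}$ so the alternating sum telescopes to $(-1)^{\dim\oX}\mu_{\oX}(p)-\mu_{\oX^{(\dim\oX)}}(p)$, and identify $\mu_{\oX^{(\dim\oX)}}(p)=m-1$. Your extra care with the sign and $(2\pi)$-exponent bookkeeping and with the identification of the Milnor number of the $m$-fold point (the $x^m$ computation) only makes explicit what the paper leaves implicit.
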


\begin{proof}
We can evaluate each summand by using
Corollary~\ref{cor:Milnor}. This shows that the left-hand side equals
\begin{multline*}
(-1)^{\dim \oX} (\mu_{\oX}(p) + \mu_{\oX^{(1)}}(p))
+(-1)^{\dim \oX-1} (\mu_{\oX^{(1)}}(p) + \mu_{\oX^{(2)}}(p)) \\
+\cdots 
+(-1)^{1} (\mu_{\oX^{(\dim \oX-1)}}(p) + \mu_{\oX^{(\dim \oX)}}(p))
\end{multline*}
that is (`telescoping') $(-1)^{\dim \oX} \mu_{\oX}(p) - \mu_{\oX^{(\dim \oX)}}(p)
=(-1)^{\dim \oX} \mu_{\oX}(p)-m+1$, as stated.
\end{proof}

An integral formula for the Milnor number was obtained by Phillip Griffiths 
(\cite{Griffiths}, \cite{Kennedy}; and see~\cite[p.~207]{Loeser}). 
It would be interesting to establish a direct relation between the two formulas.

\begin{example}
The simplest example illustrating the situation considered here is
probably a family of nonsingular conics degenerating to the union
$\overline X$ of two lines, $xy = \delta z^2$ in
$\PP^2$. Topologically, smooth conics and complex projective lines are
$2$-spheres; as $\delta\to 0$, the degeneration may be pictured as
follows.
\begin{center}
\includegraphics[scale=.25]{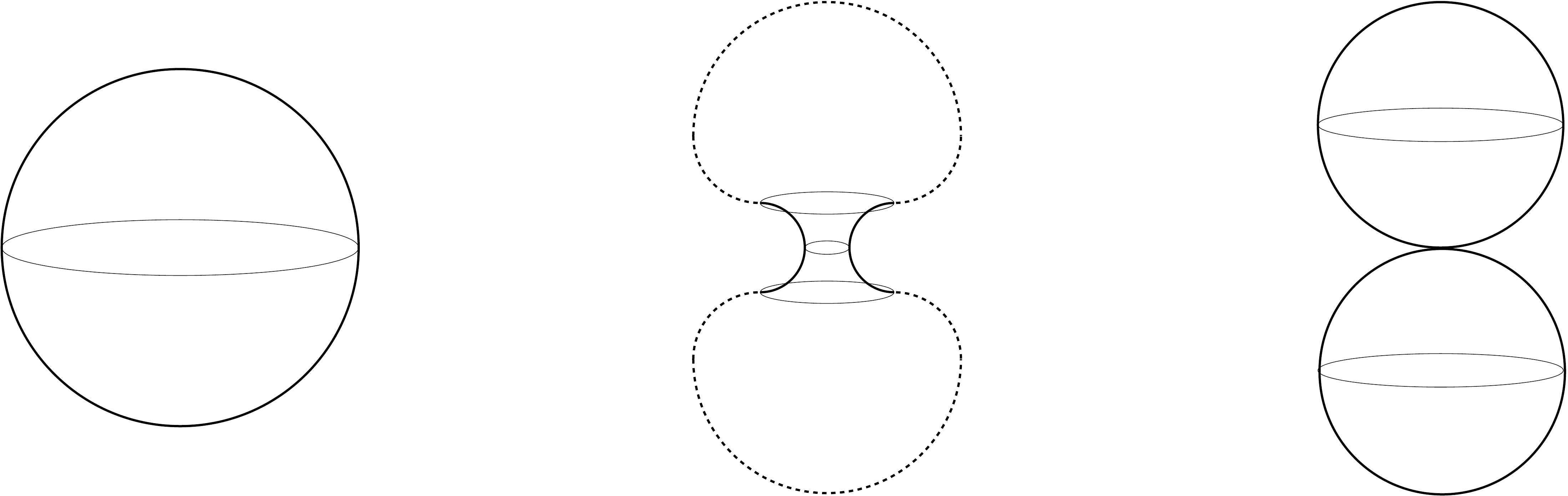}
\end{center}
where the saddle in the middle depicts the intersection with an
$\epsilon$-polydisk centered at $p$. In this example, the integral in
the left-hand side of~\eqref{eq:milnor} may be evaluated explicitly;
as $\delta\to 0$, it converges to
\[
\frac{2(\epsilon^2-1)}{(\epsilon^2+1)}
\]
and hence to $-2$ as $\epsilon \to 0$. According to
Corollary~\ref{cor:oneMilnor},
\[
-2 = (-1)^1 \mu_{\oX}(p)-2+1\quad,
\]
i.e., $\mu_{\oX}(p)=1$ (as it should).
\end{example}

\section{Extensions and further questions}\label{sec:Efq}
\subsection{} 
For simplicity of exposition we assumed that $\oX$ is projective, but
Theorem \ref{thm-integral} applies to subvarieties of any algebraic
K\"ahler manifold.

\subsection{}
In fact, an analogue of Proposition~\ref{prop-main} holds (with the
same proof) for the Mather Chern class of any coherent sheaf
(\cite{Schwartz}, \cite{Kwie}) realized as a quotient of a locally
free sheaf~$\mathcal E$, with respect to any hermitian metric defined
on the vector bundle of sections of the dual~$\mathcal E^\vee$.

\subsection{}
Homology classes cannot, in general, be multiplied
together. Therefore, multiplicativity properties of Mather Chern
classes are subtle.  It follows from \cite{Barthel} and \cite{BW} that
these classes lift canonically to middle intersection homology, giving
a well defined homology-valued product of any two such classes.
Proposition \ref{prop-integrals} provides an invariant interpretation
of any top-degree product of Mather Chern classes and, as observed
there, a product of any collection of Mather-Chern classes may be
defined by means of a corresponding product in the Nash blow-up.
We do not know whether the vector space spanned by these products of
Mather Chern classes forms a natural well-defined ring within the
homology $H_*(\oX)$.

\subsection{} 
Pontrjagin classes of compact complex manifolds also admit Chern-Weil
descriptions, and the signature of such a manifold may be expressed as
a (rational) linear combination of Chern numbers, hence as a curvature
integral.  If $\oX$ is a singular variety, the same linear combination
of Mather-Chern numbers is defined and by Proposition
\ref{prop-integrals} it is given by the same curvature integral over
the nonsingular part, $X$.  Is this number an integer?  What is the
relation between this number and the (intersection homological)
signature of $\oX$?

\end{document}